\newcommand\redsout{\bgroup\markoverwith{\textcolor{red}{\rule[0.5ex]{2pt}{0.4pt}}}\ULon}
\newtheorem{theorem}{Theorem}
\newtheorem*{theorem*}{Theorem}
\newtheorem{proposition}{Proposition}
\newtheorem{corollary}{Corollary}
\theoremstyle{remark}
\newtheorem{remark}{Remark}
\theoremstyle{definition}
\newtheorem{definition}{Definition}
\newcommand{\D}{\mathbb{D}}
\newcommand{\8}{\infty}
\newcommand{\h}{H(\D)}
\newcommand{\V}{\mathcal{V}_n}
\newcommand{\hh}{\mathcal{H}^2}
\newcommand{\z}{\prescript{}{0}{\mathcal{S}^2_{n}}}
\newcommand{\sh}{\mathcal{S}_n^2}
\date{}
\begin{document}


\title{The lattices of invariant subspaces of a class of operators on the Hardy space}

\author{\v{Z}eljko \v{C}u\v{c}kovi\'c and Bhupendra Paudyal}

\maketitle

\begin{abstract}

 In the authors' first paper, Beurling-Rudin-Korenbljum type characterization of the closed ideals in a certain algebra of holomorphic functions was used to describe the lattice of invariant subspaces of the shift plus a complex Volterra operator. Current work is an extension of the previous work and it describes the lattice of invariant subspaces of the shift plus a positive integer multiple of the complex Volterra operator on the Hardy space. Our work was motivated by a paper by Ong who studied the real version of the same operator.
\end{abstract}

\maketitle
\section{Introduction}
Characterizing all closed invariant subspaces of a bounded operator acting on a separable Hilbert space has typically been a challenging problem. Despite the researchers continuous efforts for the last several decades in this area, only a few operators have had their lattice of closed invariant subspaces completely described. One of the most influential results in this direction is Beurling's Theorem \cite{Be48}. It describes the lattice of the shift operator acting on the Hardy space.  Sarason \cite{Sa65} characterized all closed invariant subspaces of the real Volterra operator on $L^2[0,1]$ utilizing the Beurling's result. He also studied the invariant subspaces of multiplication plus real Volterra operator on $L^p[0,1]$ in \cite{Sa74}. Following Sarason's technique Ong \cite{ON81} studied invariant subspaces of the shift plus a multiple of the Volterra operators on $L^2[0,1]$. Aleman and Korenblum \cite{Al08} characterized the invariant subspaces of the complex Volterra operator acting on the Hardy space. In our first paper, \v{C}u\v{c}kovi\'c and Paudyal \cite{BP15}, we used Beurling-Rudin-Korenbljum type characterization of the closed ideals in certain algebra of holomorphic functions to describe the lattice of invariant subsapces of the shift plus a complex Volterra operator. Motivated by Ong's work \cite{ON81}, in this paper we extend our previous result and obtain a complex analogue of Ong's result.
 
Suppose $\D$ is the unit disk in the complex plane and $\h$ be the space of holomorphic functions on the unit disk. Let us denote the space of bounded holomorphic functions by $H^\8$ and the Hardy space by $\hh$. For $f(z)=\sum_{n=0}^\infty a_n z^n \in \hh$, we define a norm
\begin{equation} \label{eq1}
\|f\|^2_{\hh}= \sum_{n=0}^\8|a_n|^2.
\end{equation}

  The main result of this work characterizes the lattice of closed invariant subspaces of one parameter family of the shift plus an integer times the Volterra operator, defined by 
  
  \begin{equation}\label{eq2}
(T_n f)(z)=z f(z)+n\int_0^z f(w) dw, \hspace{.5cm} ~ z\in \D, n\in \mathbb{N}.
\end{equation}
 Since both the shift operator and complex Volterra operator acting on $\hh$ are bounded on $\hh$, so are $\{T_n\}_n$.

The idea of our paper is to convert the invariant subspace problem into an equivalent problem of characterizing the closed ideals of certain Banach algebras. Then we use the result from Korenbljum \cite{KO72} and \cite{KO72A} that characterized these ideals. Korenbljum's work in turn relates to the work of Rudin \cite{RU57} and his main theorem on the structure of the closed ideals of the disk algebra. These techniques play an important role in our work.  Montes-Rodriguez, Ponce-Escudero and Shkarin \cite{MO10} used a similar idea to characterize invariant subspaces of certain composition operators acting on the Hardy space. 

\section{Banach Algebra $\sh$} 

 Let us define a space
\begin{equation}\label{eq3}
 \sh =\{f\in \hh: f^{(n)}\in \hh\}, ~\text{where}~n\in\mathbb{N}~\text{and}~ f^{(n)}(z)=\frac{d^n}{dz^n}f(z),
\end{equation}
with the norm given by  \begin{equation} \label{eq4}
 \|f\|^2_{\sh} = \|f^{(n)}\|^2_{\hh }+\|f\|^2_{\hh}.
\end{equation}

 As in our previous paper \cite{BP15}, the space $\sh$ becomes a Banach space under this norm and polynomials are dense in $\sh$. It is clear from the definition that $\mathcal{S}_{n+1}^2\subset \mathcal{S}_n^2$. We now show that each $\mathcal{S}_n^2$ is contained in the space of bounded holomorphic functions $ H^\8$. For this, it is enough to show that $\mathcal{S}_1^2 \subset H^{\infty}$. Let us recall that \[|f(z)|\leq \frac{\|f\|_{\hh}}{\sqrt{1-|z|^2}},~ f\in \hh, z\in\D .\] 
 
 Since $f'\in \hh$ for $ f\in \mathcal{S}_1^2$, we have
 \begin{align}\label{eq5}
|f(z)-f(0)|
 =&~|\int_0^1 zf'(tz) dt|\nonumber \\
 \leq &~\|f'\|_{\hh} \int_0^1 \frac{|z|}{\sqrt{1-|tz|^2}} dt \nonumber \\
  \leq &~\|f'\|_{\hh} \int_0^1 \frac{|z|}{\sqrt{1-t|z|}} dt \nonumber\nonumber\\
\leq & ~2 \|f'\|_{\hh}.
\end{align}

For any $f\in  \mathcal{S}_1^2$, this further shows that 
\begin{align}\label{eq6}
\|f\|_\8\leq &~ 2\|f'\|_{\hh}+|f(0)|\nonumber\\
\leq &~ 2\|f'\|_{\hh}+ 2\|f\|_{\hh}=2\|f\|_{\mathcal{S}_1^2}.
\end{align}
Now it follows that $\mathcal{S}_{n+1}^2\subset \mathcal{S}_n^2 \subset H^\infty$. In fact we will show that there exists $C_k > 1$ (depending on $k$) such that  $ \|f\|_{\mathcal{S}_{k}^2} \leq C_k\|f\|_{\mathcal{S}_{k+1}^2}$for $f\in \mathcal{S}_{k+1}^2 $. 

\begin{align}\label{eq7}
 \|f\|_{\mathcal{S}_k^2}^2 =&~ \|f^{(k)}\|_{\hh}^2+\|f\|_{\hh}^2\nonumber\\ 
 = &~\sum_{n=k}^\infty |n (n-1)(n-2)\dots (n-k+1) a_n|^2+\|f\|_{\hh}^2\nonumber\\
  =&~ \big(k!\big)^2 |{a_k}|^2+\sum_{n=k+1}^\infty |n (n-1)(n-2)\dots (n-k+1) a_n|^2+\|f\|_{\hh}^2\nonumber\\
   \leq&~ \big(k!\big)^2\|f\|_{\hh}^2+\|f^{(k+1)}\|_{\hh}^2+\|f\|_{\hh}^2~~~(\text{using the fact $|a_k|^2\leq\|f\|_{\hh}^2$ })\nonumber\\
   \leq&~ C^2_k \|f\|_{\mathcal{S}_{k+1}^2}^2~~~(\text{for some constant $C_k > 1$}).
  \end{align}
 Furthermore, the pointwise multiplication makes $\sh$ a Banach algebra. We prove it by induction. If $n=1$ and $f, g$ are in $\mathcal{S}_1^2$, then
\begin{align}\label{eq8}
\|fg\|^2_{\mathcal{S}_1^2} =& \|(fg)'\|^2_{\hh }+\|fg\|^2_{\hh}\nonumber\\
=&\|gf'+fg'\|^2_{\hh}+\|fg\|^2_{\hh}\nonumber\\
\leq&\|gf'\|^2_{\hh}+2 \|gf'\|_{\hh}\|fg'\|_{\hh}+\|fg'\|^2_{\hh}+ \|fg\|^2_{\hh}\nonumber\\
\leq&\|g\|^2_\8 \|f'\|^2_{\hh}+2 \|f\|_\8\|g\|_\8\|f'\|_{\hh}\|g'\|_{\hh} \nonumber\\ 
&+\|f\|^2_\8\|g'\|^2_{\hh} + \|g\|^2_\8\|f\|^2_{\8}\nonumber \\
\leq & 4\|g\|^2_{\mathcal{S}_1^2} \|f\|^2_{\mathcal{S}_1^2}+ 8\|g\|^2_{\mathcal{S}_1^2}\|f\|^2_{\mathcal{S}_1^2} + 4\|g\|^2_{\mathcal{S}_1^2} \|f\|^2_{\mathcal{S}_1^2}~~\text{(using\eqref{eq6})}\nonumber\\
\leq & 16 \|f\|^2_{\mathcal{S}_1^2}\|g\|^2_{\mathcal{S}_1^2}.
\end{align}
We assume for $n=k$ there is a $C>0$ such that $\|fg\|_{\mathcal{S}_{k}^2}\leq C \|f\|_{\mathcal{S}_{k}^2}\|g\|_{\mathcal{S}_{k}^2}$ for all $f,g \in\mathcal{S}_{k}$. Now we want to prove that an analogues inequality is true for $n=k+1$. Let us assume $f, g\in\mathcal{S}_{k+1}\subset \mathcal{S}_{k}$ which implies $f', g'\in \mathcal{S}_{k}$ and we have
\begin{align}\label{eq9}
\|fg\|^2_{\mathcal{S}_{k+1}^2} =&~ \|(fg)^{(k+1)}\|^2_{\hh }+\|fg\|^2_{\hh}\nonumber\\
=&~ \|((fg)')^{(k)}\|^2_{\hh}+\|fg\|^2_{\hh}\nonumber\\
\leq&~ \|(fg)'\|^2_{\mathcal{S}_k^2}+\|fg\|^2_{\hh}\nonumber\\
=&\|gf{'}+fg{'}\|^2_{\mathcal{S}_k^2}+\|fg\|^2_{\hh}\nonumber\\
\leq&\|gf'\|^2_{\mathcal{S}_k^2}+2 \|gf'\|_{\mathcal{S}_k^2}\|fg'\|_{\mathcal{S}_k^2}+\|fg'\|^2_{\mathcal{S}_k^2}+ \|fg\|^2_{\mathcal{S}_k^2}\nonumber\\
\leq& C^2\|g\|^2_{\mathcal{S}_k^2}\|f'\|^2_{\mathcal{S}_k^2}+2C^2 \|g\|_{\mathcal{S}_k^2}\|f'\|_{\mathcal{S}_k^2}\|f\|_{\mathcal{S}_k^2}\|g'\|_{\mathcal{S}_k^2}+C^2\|f\|^2_{\mathcal{S}_k^2}\|g'\|^2_{\mathcal{S}_k^2}+ C^2\|f\|^2_{\mathcal{S}_k^2}\|g\|^2_{\mathcal{S}_k^2}
\end{align}
Further,
\begin{align}\label{eq9'}
 \|f'\|_{\mathcal{S}_k^2}^2 =&~ \|f^{(k+1)}\|_{\hh}^2+\|f'\|_{\hh}^2\nonumber\\ 
\leq &\|f\|_{\mathcal{S}_{k+1}^2}^2+\|f\|_{\mathcal{S}_{1}^2}^2\nonumber\\
\leq &\|f\|_{\mathcal{S}_{k+1}^2}^2+D_k\|f\|_{\mathcal{S}_{k+1}^2}^2~~~(\text{for some constant  $D_k>1$, from \eqref{eq7}})\nonumber\\
=& (D_k+1)\|f\|_{\mathcal{S}_{k+1}^2}^2.
  \end{align}
Hence applying Equations \eqref{eq7} and \eqref{eq9} on  \eqref{eq9'}, we see that $\|fg\|_{\mathcal{S}_{k+1}^2}\leq M_k\|f\|_{\mathcal{S}_{k+1}^2}\|g\|_{\mathcal{S}_{k+1}^2}$ for some $M_k>0$. Thus this discussion suggests that the multiplication is continuous in each factor separately. Therefore there exists a norm equivalent to $\|.\|_{\sh}$, for which $\sh$ is a Banach algebra (see \cite[page 212]{Ka04}). Keeping this fact in mind, we do not differentiate between the norm $\|.\|_{\sh}$ and its equivalent norm that makes $\sh$ a Banach algebra.

Now let us define a subalgebra of $\sh$ given by 
\begin{equation}\label{eq10}
 \z=\{f\in\sh : f^{(i)}(0)=0, ~0\leq i\leq n-1\}
\end{equation} 
Assume $g\in\overline{\z}$ under the norm defined on \eqref{eq4}. This implies there exists a sequence $g_l\in\z$, $l\in\mathbb{N}$ such that $g_l$ converges to $g$ in $\sh$ norm. This implies $g_l$ converges to $g$ in $\hh$ norm. Since $g^{(i)}_l(0)=0$ for all $l$, it follows that $g^{(i)}(0)=0, ~ \text{for}~ 0\leq i\leq n-1$. Hence $\z$ is a closed subalgebra which in fact is a Banach algebra. Furthermore from the definition, we can see that $z^i\notin \z$ for $0\leq i\leq n-1$. Clearly, 
\begin{equation}\label{eq11}
\sh=[1]\bigoplus [z] \bigoplus [z^2]\bigoplus \dots \bigoplus [z^{n-2}]\bigoplus [z^{n-1}] \bigoplus \z,
\end{equation}
 and hence \[\z= \overline{span}\{z^{n+k-1}:k\in \mathbb{N}\}.\]

\section{On the Similarity}

For a positive integer $n$, let us consider the complex Riemann-Liouville integral $\V$, see \cite[p.421]{Sa93},

\begin{equation}\label{eq12} (\V f)(z) = \frac{1}{\Gamma(n)}\int_0^z (z-w)^{n-1} f(w)\, dw,~ f\in \hh,~z\in \D.
\end{equation}

The complex Riemann-Liouville integral  in fact can be given by an iterated integral as shown below. 
\begin{align*}
(\V f)(z)=\frac{1}{\Gamma(n)}\int_0^z (z-w)^{n-1} f(w) \,dw
= \int_0^z \int_0^{\sigma_1} \cdots \int_0^{\sigma_{n-1}} f(\sigma_{n}) \, \mathrm{d}\sigma_{n} \cdots \, \mathrm{d}\sigma_2 \, \mathrm{d}\sigma_1.
\end{align*}
A proof of this is given by induction. For $n=1$, it is obvious. Now, suppose this is true for $n$, that is:
\[(\mathcal{V}_n f)(z)=\dfrac{1}{\Gamma(n)}\int_0^z (z-w)^{n-1}f(w) \,dw\\
=\int_0^z \int_0^{\sigma_1} \cdots \int_0^{\sigma_{n-1}} f(\sigma_{n}) \, \mathrm{d}\sigma_{n} \cdots \, \mathrm{d}\sigma_2 \, \mathrm{d}\sigma_1.\]

 Applying the induction hypothesis,
\begin{align*}
(\mathcal{V}_{n+1} f)(z)=&\int_0^z \int_0^{\sigma_1} \cdots \int_0^{\sigma_{n-1}}\int_0^{\sigma_{n}} f(\sigma_{n+1}) \, \mathrm{d}\sigma_{n+1}\,\mathrm{d}\sigma_{n}  \cdots \, \mathrm{d}\sigma_2 \, \mathrm{d}\sigma_1 \\
& \left( \text{Assume}  ~g(\sigma_n)=\int_0^{\sigma_n} f(\sigma_{n+1})d\sigma_{n+1}\right)\\
=&\int_0^z \int_0^{\sigma_1} \cdots \int_0^{\sigma_{n-1}} g(\sigma_{n}) \, \mathrm{d}\sigma_{n}\,\mathrm{d}\sigma_{n-1}  \cdots \, \mathrm{d}\sigma_2 \, \mathrm{d}\sigma_1\\
=&\;\dfrac{1}{\Gamma(n)}\int_0^z (z-w)^{n-1}g(w) \,dw\\
=&\;\dfrac{z^n}{\Gamma(n)}\int_0^1 (1-t)^{n-1}g(tz) \,dt,~~\text{assuming} ~w=tz\\
=&\;\dfrac{z^n}{\Gamma(n)} \left[ -\dfrac{(1-t)^n}{n}\, g(tz)\Biggr|_0^1+\int_0^1\dfrac{(1-t)^{n}}{n}g'(tz)\;z\;dt\right]\\
=&\;\dfrac{1}{\Gamma(n)} \int_0^1\dfrac{(z-tz)^{n}}{n}\;f(tz)\; z\;dt\\
=&\;\dfrac{1}{\Gamma(n+1)} \int_0^z(z-w)^{n}f(w) \;dt,~~(\text{pluging back}~ tz=w)
\end{align*}

Now in the next theorem, we will show that the complex Riemann-Liouville operator is bounded isomorphism from $\hh$ onto $\z$. It will be seen that the collection of all the closed invariant subspaces of $T_n$ is in one-to-one correspondence with the collection of all closed ideals of $\z$. In fact these closed ideals are precisely the collection of invariant subspaces of multiplication operator $M_z$ acting on $\z$. 

\begin{theorem}\label{thm1}
Let $\V$ be the complex Riemann-Liouville operator on $\hh$. Then the following statements hold:
\renewcommand\theenumi{\roman{enumi}}
\begin{enumerate}
\item[(i)]  Range of $\V= \z$.
\item[(ii)] $\V$ is a bounded isomorphism from $\hh$ onto $\z$, and its inverse is $\dfrac{d^n}{dz^n}$.
\item[(iii)] The operator $T_n$ acting on $\hh$ is similar under $\V$ to the multiplication operator $M_z$ acting on $\z$. 
\end{enumerate}
\end{theorem}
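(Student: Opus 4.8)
The plan is to reduce the whole statement to one computation and two uses of the definition of the $\sh$-norm. First I would record how $\V$ acts on monomials: iterating the integral (or using the iterated-integral representation of $\V$ established above) gives $\V z^k = \dfrac{k!}{(k+n)!}\,z^{k+n}$, so that $\V f = \sum_{k\ge 0} a_k \dfrac{k!}{(k+n)!}\,z^{k+n}$ for $f=\sum_{k\ge 0}a_k z^k\in\hh$. Two consequences are immediate and drive everything: differentiating the power series term by term $n$ times gives $\big(\V f\big)^{(n)}=f$, and $\V f$ has a zero of order at least $n$ at the origin, so $\big(\V f\big)^{(i)}(0)=0$ for $0\le i\le n-1$. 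Also, since $\dfrac{k!}{(k+n)!}=\dfrac{1}{(k+1)(k+2)\cdots(k+n)}\le 1$, we get $\|\V f\|_{\hh}\le\|f\|_{\hh}$, so in particular $\V f\in\hh$.

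For (i): if $f\in\hh$ then $\V f\in\hh$ and $\big(\V f\big)^{(n)}=f\in\hh$, hence $\V f\in\sh$; together with the vanishing of its first $n$ derivatives at $0$ this gives $\V f\in\z$, so $\operatorname{Range}\V\subseteq\z$. Conversely, given $g\in\z$, set $f:=g^{(n)}\in\hh$; then $\big(\V f-g\big)^{(n)}=0$, so $\V f-g$ is a polynomial of degree at most $n-1$, and since all derivatives of both $\V f$ and $g$ of orders $0,\dots,n-1$ vanish at the origin, that polynomial is identically zero. Hence $g=\V f\in\operatorname{Range}\V$, proving $\operatorname{Range}\V=\z$ and, incidentally, that $\dfrac{d^n}{dz^n}$ is a two-sided inverse of $\V\colon\hh\to\z$. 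This also handles the boundedness in (ii): on one side, $\|\V f\|_{\sh}^2=\|\big(\V f\big)^{(n)}\|_{\hh}^2+\|\V f\|_{\hh}^2=\|f\|_{\hh}^2+\|\V f\|_{\hh}^2\le 2\|f\|_{\hh}^2$; on the other, $\|g^{(n)}\|_{\hh}\le\|g\|_{\sh}$ is immediate from \eqref{eq4} (alternatively, since $\V$ is already a continuous bijection between the Banach spaces $\hh$ and $\z$, the open mapping theorem gives continuity of the inverse).

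For (iii): I would prove the intertwining relation $\V T_n = M_z \V$ on $\hh$; with (ii) this says $T_n=\V^{-1}M_z\V$, i.e. $T_n$ is similar to $M_z|_{\z}$. By the Leibniz rule, for $f\in\hh$, $\dfrac{d^n}{dz^n}\big(z\,\V f\big)=z\big(\V f\big)^{(n)}(z)+n\big(\V f\big)^{(n-1)}(z)=zf(z)+n\int_0^z f(w)\,dw=(T_nf)(z)$, using that applying $\dfrac{d^{n-1}}{dz^{n-1}}$ to the $n$-fold integral $\V f$ leaves the single integral $\int_0^z f(w)\,dw$. Now $z\,\V f$ vanishes to order at least $n+1$ at $0$ and $\big(z\,\V f\big)^{(n)}=T_n f\in\hh$ since $T_n$ is bounded on $\hh$, so $z\,\V f\in\z$; applying $\V$ to the displayed identity and using $\V\circ\dfrac{d^n}{dz^n}=\id$ on $\z$ (from (i)) gives $\V T_n f = z\,\V f = M_z\V f$. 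One could also simply check $\V T_n z^k = M_z\V z^k = \dfrac{k!}{(k+n)!}z^{k+n+1}$ on monomials and pass to all of $\hh$ by density of the polynomials and continuity. The one point needing care is that $\V\circ\dfrac{d^n}{dz^n}=\id$ holds only on $\z$ — on $\sh$ it discards the low-order Taylor data — so certifying $z\,\V f\in\z$, where the order of vanishing at $0$ and the boundedness of $T_n$ on $\hh$ enter, must come first; everything else is the coefficient computation and the definition of the $\sh$-norm.
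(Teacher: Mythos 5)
Your proof is correct and follows essentially the same route as the paper: the range identification via $(\V f)^{(n)}=f$ together with the vanishing of the first $n$ derivatives at the origin, the norm estimate $\|\V f\|_{\sh}\le 2\|f\|_{\hh}$, and the intertwining computation $\dfrac{d^n}{dz^n}\big(z\,\V f\big)=z f(z)+n\int_0^z f(w)\,dw$ followed by applying $\V$. Your explicit check that $z\,\V f\in\z$ before invoking $\V\circ\dfrac{d^n}{dz^n}=\id$ is a point the paper passes over silently, and is a welcome addition of rigor rather than a deviation.
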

\begin{proof}
\renewcommand\theenumi{\roman{enumi}}
\begin{enumerate}
\item[(i)] 

Let $g$ be in the range of $\V$, then there exists $ f\in\hh$ such that
\begin{align*}
\displaystyle{g(z)= (\V f)(z)}=&~\frac{1}{\Gamma(n)}\int_0^z (z-w)^{n-1} f(w) dw\\
= &\int_0^z \int_0^{\sigma_1} \cdots \int_0^{\sigma_{n-1}} f(\sigma_{n}) \, \mathrm{d}\sigma_{n} \cdots \, \mathrm{d}\sigma_2 \, \mathrm{d}\sigma_1.
\end{align*}
The last expression is obtained from Cauchy formula for $n$-repeated integral. This repeated integral can be differentiated $n$-times. Now it is easy to see that $g^{(n)}=f\in \hh$ and $g^{(i)}(0)=0$ for $ 0\leq i\leq n-1$. Therefore $g\in \z$.
 
Conversely, suppose that $g \in \z$. Then $g^{(n)}\in \hh$ and $g^{(i)}(0)=0$ for $ 0\leq i\leq n-1$. Now
\begin{align*}
g(z)= &\int_0^z \int_0^{\sigma_1} \cdots \int_0^{\sigma_{n-1}} g^{(n)}(\sigma_{n}) \, \mathrm{d}\sigma_{n} \cdots \, \mathrm{d}\sigma_2 \, \mathrm{d} \sigma_1\\
 =&\frac{1}{\Gamma(n)}\int_0^z (z-w)^{n-1} g^{(n)}(w) dw =(\V g ^{(n)})(z).
\end{align*}
Therefore $g$ belongs to Range of $\V$, which completes the proof.
\item[(ii)] The complex integral operator is linear. To show boundedness, let $f\in\hh$. 
 \begin{align*}
\|\V (f)\|_{\sh}=&~\|\V (f)\|_{\hh}+\|(\V (f))^{(n)}\|_{\hh}\\
\leq &~\|f\|_{\hh} +\|f\|_{\hh}\\=&~2\|f\|_{\hh}.
\end{align*} 
To show $\V$ is one-one, assume that $f_1$ and  $f_2$ belong the Hardy space, and also assume that  \[ \int_0^z (z-w)^{n-1}f_1(w)dw=\int_0^z (z-w)^{n-1}f_2(w)dw, \hspace{1cm} \text{for~all}~ z \in\D.\] Differentiating both sides we see that $f_1=f_2$ and hence $\V$ is one-one.\\

 Since we already have seen that $\V (f^{(n)})=f=(\V f)^{(n)}$, $\V$ is a bounded isomorphism with inverse $\V^{-1}=\dfrac{d^n}{dz^n}$.
\item[(iii)]
Take $f\in \hh$, then there is $g\in \z$ such that $\V f =g$. We get $f(z)=(\V ^{-1}g)(z)= g^{(n)}(z)$ and therefore $\displaystyle g^{(n-1)}(z)=\int_0^z f(w) dw.$
\begin{align*}
\dfrac{d^n}{dz^n}(M_zg(z))=\dfrac{d^n}{d^n}(zg(z))=&~\dfrac{d^{n-1}}{d^{n-1}}\Big(g(z)+z g'(z)\Big)\\
=&~g^{(n-1)}(z)+\dfrac{d^{n-2}}{d^{n-2}}\Big(g'(z)+z g''(z)\Big)\\
=&~g^{(n-1)}(z)+g^{(n-1)}(z)+\dfrac{d^{n-3}}{d^{n-3}}\left(g''(z)+z g'''(z)\right)\\
=&~\dots\\
=&~ng^{(n-1)}(z)+ z g^{(n)} (z)\\
=&~ z f(z)+n \int_0^z f(w) dw
=(T_nf)(z).
\end{align*}
Now applying $\V$ on the both side, we see that
\begin{align*}
M_z g(z)=&(\V T_n)f(z)\\M_z \V f(z)=&(\V T_n)f(z).
\end{align*}\end{enumerate}
So, $\V T_n=M_z\V$ and $\V T_n\V^{-1}=M_z$. 
 That is to say $\V$ transforms the operator $T_n$ into the multiplication operator $M_z$ on $\z$.
 \end{proof}

  We can summarize the theorem by the following commutative diagram
\begin{center}
\begin{tikzpicture}
\coordinate[label=above :$\z$](A) at (6,-.2);
\coordinate[label=below:$\z$] (B) at (6,4.2);
\coordinate[label= below:$\hh$] (C) at (0,4.2);
\coordinate[label=above:$\hh$] (D) at (0,-.2);
\draw[thick,->] (6,3.2)--(6,0.8);
\draw[thick,->] (0.8,4)--(5.2,4);
\draw[thick,->] (0,3.2)--(0,0.8);
\draw[thick,->] (0.8,0)--(5.2,0);
\draw[thick,->] (.8,3.2)--(5.2,.8);
\draw (6,0.8) -- (6,3.2) node[right,midway]{$M_z$ };
\draw (5.2,4) -- (0.8,4) node[above,midway]{$\V$ };
\draw (0.8,0)--(5.2,0) node[below,midway] {$ \V$ };
\draw (0,3.2)--(0,0.8) node[left,midway] {$T_n$ };
\draw (.8,3.2)--(5.2,.8) node[above,midway] {$\V T=M_z\V$ };
\end{tikzpicture}
\end{center}

\section{Invariant subspace}

We begin this section by proving the following proposition which  provides a relationship between the closed invariant subspaces of a multiplication operator $M_z$ and the closed ideals of $\sh$.

 \begin{proposition}
The closed invariant subspaces of $M_z$ on $\sh$ are exactly the closed ideals of $\sh$.
\end{proposition}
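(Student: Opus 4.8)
The plan is to prove the two inclusions separately, exploiting the fact that $\sh$ is a Banach algebra containing the constant function $1$.

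First I would show that every closed ideal $I$ of $\sh$ is $M_z$-invariant. This is immediate: if $f \in I$, then since $z \in \sh$ (indeed $z$ is a polynomial and polynomials lie in $\sh$), the product $zf = M_z f$ lies in $I$ because $I$ is an ideal. Closedness of $I$ plays no role here; only the ideal property is needed.

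Next I would show the converse: every closed $M_z$-invariant subspace $I$ of $\sh$ is an ideal. Let $f \in I$. Since $I$ is invariant under $M_z$, it contains $z^k f$ for every $k \geq 0$, hence it contains $p(z) f$ for every polynomial $p$, because $I$ is a linear subspace. The key step is then to pass from polynomials to arbitrary $h \in \sh$: given $h \in \sh$, choose a sequence of polynomials $p_m \to h$ in the $\sh$-norm (this is exactly the density of polynomials in $\sh$ asserted earlier in the excerpt). Then $p_m f \to h f$ in $\sh$, using the Banach algebra inequality $\|p_m f - h f\|_{\sh} = \|(p_m - h) f\|_{\sh} \leq C \|p_m - h\|_{\sh} \|f\|_{\sh} \to 0$. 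Since each $p_m f \in I$ and $I$ is closed, we conclude $h f \in I$. As $f \in I$ and $h \in \sh$ were arbitrary, $I$ is a (closed) ideal.

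The main obstacle — really the only non-formal point — is ensuring that the multiplicative structure is used correctly: one must invoke that $\sh$ is a Banach algebra in the equivalent norm (established earlier via the separate-continuity argument and \cite[page 212]{Ka04}), so that multiplication by the fixed element $f$ is a bounded operator on $\sh$, and one must invoke density of polynomials in $\sh$. Both facts are available from the preceding section, so the proof is short. I would also remark that the argument shows the closed $M_z$-invariant subspaces coincide with the closed ideals generated in the obvious way, and that by Theorem~\ref{thm1}(iii) the same description transfers to the closed invariant subspaces of $T_n$ on $\hh$ (after the analogous statement for $\z$ in place of $\sh$).
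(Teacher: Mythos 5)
Your proposal is correct and follows essentially the same route as the paper's proof: closed ideals are trivially $M_z$-invariant, and conversely invariance plus linearity gives $p(z)f \in I$ for all polynomials $p$, after which density of polynomials, continuity of multiplication in the Banach algebra norm, and closedness of $I$ yield $hf \in I$ for all $h \in \sh$. Your version is slightly more careful in making explicit the Banach algebra estimate $\|(p_m-h)f\|_{\sh} \leq C\|p_m-h\|_{\sh}\|f\|_{\sh}$, which the paper leaves implicit in the step $hl = \lim p_n l$.
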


\begin{proof}

Suppose $L$ is an invariant subspace of $M_z$ in $\sh$. Then since $zL\subset L$ and $1L = L$, we have $p_n L \subset L$ for any polynomial $p_n$. But the polynomials are dense in $\sh$, hence for any $h$ in $\sh$ we have that 
$h = \lim p_n$ for some sequence $p_n$.  Since $L$ is a closed subspace of $\sh$, for any $l$ in $L$ we have $hl = \lim p_n l$ is in $L$, and therefore it follows that $L$ is a closed ideal. On the other hand, each closed ideal $L$ is invariant with respect to $M_z$. This completes the proof

\end{proof}

Now we present some definitions and a theorem, which are at the heart of our Main Theorem. Detailed discussion can be found in Korenbljum work \cite{KO72A}. 
  
 \begin{definition}\label{def2}
 Let $K$ be a closed subset of the unit circle $\partial\D$. For any inner function $G$ we say $G$ is \textit{associated with} $K$ if 
\begin{itemize}
\item[(a)] if $a_1,a_2,...$ are the zeros of $G(z)$ in the open disk, then all the limit points of $\{a_k\}$ belong to $K$;
\item[(b)] the measure determining the singular part of $G$ is supported on $K$.
\end{itemize}
 \end{definition}
 
\begin{definition}\label{def3}
Let $n\geq 1$ be fixed, and $K_0, K_1,\dots, K_{n-1}$ be closed sets of the unit circle $\partial \D$ that satisfy the conditions
\begin{enumerate}
\item[(a)] $K_0 \supset K_1\supset\dots \supset K_{n-1}$;
\item[(b)] $K_0\setminus K_{n-1}$ is isolated;
\item[(c)] $G$ is an inner function associated with $K_{n-1}$ as in Definition \ref{def2}.
\end{enumerate}
Then we denote by $I\{G;K_0, K_1,\dots, K_{n-1}\}$ the closed ideal of the algebra $\sh$ consisting of all functions $ f$ that satisfy the conditions 
\begin{enumerate}
\item[(i)] $ f(z)=f'(z)=f^{(2)}(z)=\dots= f^{(i)}(z)=0, z\in K_i; 0\leq i\leq n-1;$
\item[(ii)] $G$ divides the inner part of $f$.
\end{enumerate}
\end{definition} 
  \begin{remark}
If one of the $K_i$ in the previous definition has Lebesgue arc length measure non-zero, then the corresponding ideal $I\{G;K_0, K_1,\dots, K_{n-1}\}$ is the zero ideal.
\end{remark}
 \begin{remark} The ideal $K_0, K_1,\dots, K_{n-1}$ is distinct from zero if and only if the condition 
 \begin{equation}\label{eq13}
 \int_{\partial \D} \log\rho(z)|dz| >-\infty
 \end{equation}
 holds, where $\rho(z)=\min_{\zeta \in K}|z-\zeta|,~K=K_0 \cup \{a_k\}.$
 \end{remark}

 \begin{theorem}\cite[Main Theorem]{KO72A}\label{thm2} Let $I$ be an arbitrary nonzero ideal of the algebra $\sh (n\geq 1$ fixed). Suppose moreover that 
 \begin{enumerate}
 \item[(a)] $K_i=\cap_{f\in I} K_i(f), 0\leq i\leq n-1$ where $K_i(f)=\{ z:z\in \partial\D, f(z)=f'(z)=f^{(2)}(z)=\dots= f^{(i)}(z)=0\}$;
 \item[(b)] $G(z)$ is the greatest common divisor of the inner parts of the functions $f\in I$.
 \end{enumerate}
 Then $I=I\{G;K_0, K_1,\dots, K_{n-1}\}$, and the sets $K_i$ and the function $G$ satisfy conditions $(a)-(c)$ of Definition \ref{def2}, and also satisfy Equation \eqref{eq13}.
 
 \end{theorem}

 The theorem above from Korenbljum \cite{KO72A} characterizes all closed ideals of $\sh$. In particular it says that all these ideals are of the form of $I\{G;K_0, K_1,\dots, K_{n-1}\}$ as defined in Definition \ref{def3}. Furthermore from our discussion above, these ideals are the closed invariant subspaces of $M_z$ on $\sh$.  Now we present our main theorem.

\begin{theorem} [{\bf Main Theorem}]
\label{mthm}
For some $n\in\mathbb{N}$, let $T_n$ be an operator 
\[ (T_nf)(z) =zf(z)+ n \int_0^z f(w) dw.\] defined on $\hh$. Then the lattice of closed invariant subspaces
\[Lat ~T_n=\Big\{S \subset \hh :S=\{f^{(n)}:f \in I\{G;K_0, K_1,\dots, K_{n-1}\}\cap \z\}~\text{for}~G, K_i \text {~as in Definition \ref{def3}} \Big\}.\]

\end{theorem}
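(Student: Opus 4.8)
The plan is to transport the entire question through the similarity of Theorem~\ref{thm1}. Since $\V\colon\hh\to\z$ is a bounded isomorphism --- in particular a homeomorphism, hence it carries closed subspaces to closed subspaces --- and $\V T_n=M_z\V$, the map $S\mapsto\V S$ is a lattice isomorphism from $\mathrm{Lat}\,T_n$ onto the lattice of closed $M_z$-invariant subspaces of $\z$, with inverse $L\mapsto\V^{-1}L=\{f^{(n)}:f\in L\}$ (recall $\V^{-1}=d^n/dz^n$). So the Main Theorem becomes equivalent to the claim that the closed $M_z$-invariant subspaces of $\z$ are precisely the sets $I\{G;K_0,\dots,K_{n-1}\}\cap\z$, with $G$ and the $K_i$ as in Definition~\ref{def3}; applying $\V^{-1}$ to both sides then reproduces the asserted formula for $\mathrm{Lat}\,T_n$.

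First I would observe that $\z$ is a closed ideal of $\sh$: closedness is already known, and for $f\in\sh$, $g\in\z$ the Leibniz rule gives $(fg)^{(i)}(0)=\sum_{j=0}^{i}\binom{i}{j}f^{(j)}(0)\,g^{(i-j)}(0)=0$ for $0\le i\le n-1$, since every $g^{(i-j)}(0)$ vanishes. Consequently, for any closed ideal $I$ of $\sh$ the intersection $I\cap\z$ is again a closed ideal of $\sh$ contained in $\z$, and in particular a closed $M_z$-invariant subspace of $\z$. This settles one inclusion in the equivalence above.

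For the other inclusion I would establish the analogue of the Proposition for the subalgebra $\z$: every closed subspace $L\subseteq\z$ with $zL\subseteq L$ is a closed ideal of $\sh$. Indeed $z^kl\in L$ for all $l\in L$ and $k\ge0$, so $pl\in L$ for every polynomial $p$; and given $h\in\sh$, choosing polynomials $p_j\to h$ in $\sh$ (polynomials are dense in $\sh$) and using continuity of multiplication in the Banach algebra $\sh$, one gets $p_jl\to hl$ in $\sh$, hence $hl\in L$ since $L$ is closed in $\sh$ ($\z$ itself being closed in $\sh$). By Korenbljum's classification, Theorem~\ref{thm2}, this ideal equals $I\{G;K_0,\dots,K_{n-1}\}$ for some $G$ and $K_i$ satisfying Definition~\ref{def3}, and because it is contained in $\z$ it coincides with $I\{G;\dots\}\cap\z$. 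Together with the previous paragraph, the closed $M_z$-invariant subspaces of $\z$ are exactly the $I\{G;\dots\}\cap\z$; translating back through $\V^{-1}=d^n/dz^n$ then gives the Main Theorem.

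The one genuinely delicate step is the argument of the third paragraph. The Proposition handles closed $M_z$-invariant subspaces of the \emph{full} algebra $\sh$, but here the ambient space is the \emph{proper} subalgebra $\z$, which does not contain $1,z,\dots,z^{n-1}$; consequently polynomials are not dense in $\z$ and the Proposition cannot be invoked verbatim. The way around this is to exploit density of polynomials in $\sh$ (rather than in $\z$) together with continuity of multiplication, which force an $M_z$-invariant subspace of $\z$ to be stable under multiplication by \emph{all} of $\sh$, so that Korenbljum's theorem for $\sh$ applies; the fact that $\z$ is an ideal of $\sh$ then guarantees that intersecting with $\z$ produces precisely the required family of ideals. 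Everything else amounts to routine bookkeeping with the isomorphism $\V$.
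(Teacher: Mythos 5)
Your proposal is correct and follows essentially the same route as the paper: view a closed $M_z$-invariant subspace of $\z$ as a closed $M_z$-invariant subspace of $\sh$, apply the Proposition and Korenbljum's theorem to identify it as $I\{G;K_0,\dots,K_{n-1}\}\cap\z$, and transport back through the similarity $\V T_n=M_z\V$ with $\V^{-1}=d^n/dz^n$. Your extra remarks (the Leibniz-rule check that $\z$ is an ideal of $\sh$, and the explicit warning that polynomials are not dense in $\z$ so the Proposition must be applied after passing to $\sh$) merely make explicit what the paper's one-line step ``it is also an invariant subspace of $M_z$ on $\sh$'' already accomplishes.
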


\begin{proof}

Suppose $L$ is an invariant subspace of $M_z|_{\z}$. Then it is also an invariant subspace of $M_z$ on $\sh$. By Proposition 1, $L$ is a closed ideal of $\sh$ and hence, by Korenbljum, $L=I\{G;K_0, K_1,\dots, K_{n-1}\}$ for some $G$ and $K_i$. But since $L$ is also in $\z$, so $L=I\{G;K_0, K_1,\dots, K_{n-1}\}\cap \z$. Conversely,  $I\{G;K_0, K_1,\dots, K_{n-1}\}$ is an ideal of $\sh$, so by Proposition 1 it is a $z$ invariant subspace of $\sh$ and $\z$ is also $z$ invariant, hence their intersection $L=I\{G;K_0, K_1,\dots, K_{n-1}\}\cap \z$ is an invariant subspace of $M_z|_{\z}$. It follows that every invariant subspace of $M_z|_{\z}$ is of the form $L=I\{G;K_0, K_1,\dots, K_{n-1}\}\cap \z$ for some $G, K_i$. Hence
 \[Lat~ M_z|_{\z}=\Big\{I\{G;K_0, K_1,\dots, K_{n-1}\} \cap \z: G, K_i~ \text{ as in Definition \ref{def3}}\Big\}.\]
Since $ \V T_n=M_z\V$, the closed invariant subspaces of $T_n$ on $\hh$ are of the form $\V^{-1} (I\{G;K_0, K_1,\dots, K_{n-1}\} \cap\z)$. Recall $\V^{-1}=\dfrac{d^n}{dz^n}$ and hence
\[Lat ~T_n=\Big\{S \subset \hh :S=\{f^{(n)}:f \in I\{G;K_0, K_1,\dots, K_{n-1}\}\cap \z\}~\text{for}~G, K_i~\text{~as~in~Definition \ref{def3}} \Big\}.\]

\end{proof}

Department of Mathematics and Statistics, The University of Toledo, Toledo, Ohio\\
{\it E-mail address: zeljko.cuckovic@utoledo.edu}\\

Mathematics and Computer Science Department, Central State University, Wilberforce, Ohio\\
{\it Email address: bpaudyal@centralstate.edu}

\end{document}